\theoremstyle{plain}
\newtheorem{theorem}{Theorem}[section]
\newtheorem{lemma}[theorem]{Lemma}
\theoremstyle{definition}
\newtheorem{definition}[theorem]{Definition}
\theoremstyle{definition}
\newtheorem{example}[theorem]{Example}
\newtheorem{remark}[theorem]{Remark}
\newcommand{\opa}{\diamond} 
\newcommand{\opb}{\otimes}
\newcommand{\Rdisjoint}{\textnormal{(R1)}\xspace}
\newcommand{\RtripA}{\textnormal{(R2)}\xspace}
\newcommand{\RtripB}{\textnormal{(R3)}\xspace}
\title{Latin trades and simplicial complexes}
\author{
Carlo H\"{a}m\"{a}l\"{a}inen\footnote{Supported by Eduard \v Cech center, grant LC505.}  \\
Department of Mathematics \\ Charles University \\ Sokolovsk\'a 83 \\ 186 75 Praha 8 \\ Czech Republic \\
{\texttt carlo.hamalainen@gmail.com}}
\begin{document}

\maketitle

\begin{abstract}
In this note we introduce the concept of the {\em trade space} of a latin square.
Computations using Sage and the GAP package 
Simplicial Homology
are presented.
\end{abstract}

\section{Introduction}

We first introduce the concept of a latin square and a latin bitrade.
For a more detailed exposition and literature survey,
see~\cite{hamalainen2007}.

\begin{definition}\label{defnLatinSquare}
Let $N$ be a fixed set of size $n > 0$.
A {\em latin square\/} 
$L$ of {\em order $n$\/} is an $n \times n$ array with
rows and columns indexed by $N$,
and entries from the set $N$.
Further, each $e \in N$ appears exactly once in each row
and exactly once in
each column.
A {\em partial latin square\/} 
of order $n$
is an $n \times n$ array where each $e \in N$ occurs at most once
in each row and at most once in each column. 
\end{definition}
Usually for index set $N$ we will use
$[n] = \{ 1,\, 2, \dots,\, n \}$ 
or sometimes
$[n]_0 = \{ 0,\, 1, \dots,\, n-1 \}$ when working with 
modulo arithmetic.  
Note that a latin square is a partial latin square with no empty cells.
A latin square $L$ may also be
represented as a set of ordered triples, where
$(r,c,e) \in L$ denotes the fact that symbol $e$ appears in the cell at
row $r$, column $c$, of $L$.  Alternatively we write
$L^{\opa}$ with binary operation $\opa$ such that
$r \opa c = e$ if and only if $(r,c,e) \in L$. Similarly, a partial
latin square $P$ may be written as $P^{\opa}$.
We use setwise and binary operator notation interchangeably.

The {\em size\/} of a partial latin square $P$ is the number of filled cells, denoted
by $\left| P \right| = \left| \{ (r,c,e) \mid (r,c,e) \in P \} \right| $.
A partial latin square $P'$ is {\em contained in}, or is a 
{\em (partial) subsquare of\/} $P$ if and only if $P' \subseteq P$.

\subsection{Latin bitrades and trades}

\begin{definition}\label{defnBitradeLatin}
A {\em latin bitrade\/} $(T^{\opa},\, T^{\opb})$ is a pair of partial
latin squares such that:
\begin{enumerate}

\item $\{ (i,\,j) \mid (i,\,j,\,k) \in T^{\opa} \mbox{ for some symbol $k$} \} 
\newline
= \{ (i,\,j) \mid (i,\,j,\,k') \in T^{\opb} \mbox{ for some symbol $k'$} \}$;

\item for each $(i,\,j,\,k) \in T^{\opa}$ and $(i,\,j,\,k') \in T^{\opb}$, $k
\neq k'$;

\item the symbols appearing in row $i$ of
$T^{\opa}$
are the same as those of row $i$ of
$T^{\opb}$; 
the symbols appearing in column $j$ of
$T^{\opa}$
are the same as those of column $j$ of
$T^{\opb}$.

\end{enumerate}
\end{definition}

The following definition is equivalent to
Definition~\ref{defnBitradeLatin}:

\begin{definition}\label{defnBitradeA123}
A {\em latin bitrade\/} $(T^{\opa},\, T^{\opb})$ is a pair of partial
latin squares $T^{\opa}$, $T^{\opb} \subseteq A_1 \times A_2 \times A_3$
such that:
\begin{itemize}
\item[\Rdisjoint] $T^{\opa} \cap T^{\opb} = \emptyset$;

\item[\RtripA] for all $(a_1,\, a_2,\, a_3) \in T^{\opa}$ and all $r$,
$s \in \{1,\, 2,\, 3\}$,
$r \neq s$, there exists a unique $(b_1,\, b_2,\, b_3) \in T^{\opb}$
such that $a_r=b_r$ and $a_s=b_s$;

\item[\RtripB] for all $(a_1,\, a_2,\, a_3) \in T^{\opb}$ and all $r$,
$s \in \{1,\, 2,\, 3\}$,
$r \neq s$, there exists a unique $(b_1,\, b_2,\, b_3) \in T^{\opa}$
such that $a_r=b_r$ and $a_s=b_s$.

\end{itemize}
\end{definition}

Note that \RtripA and \RtripB imply that each row (column) of
$T^{\opa}$ contains the same subset of $A_3$ as the corresponding
row (column) of $T^{\opb}$.  Since all of the bitrades in this
dissertation are latin bitrades, we usually shorten `latin bitrade'
to just `bitrade.'

For a bitrade $(T^{\opa},\, T^{\opb})$ we refer to $T^{\opa}$ as the {\em
trade}, and $T^{\opb}$ as the {\em disjoint mate}. A particular trade
may have more than one disjoint mate.  

\begin{example}
Consider the following latin squares:
\begin{equation}
L^{\opa} = 
\begin{array}{c|cccccc}
\opa & 0 & 1 & 2 & 3 & 4 & 5 \\
\hline
0 &  0 &1 &3 &2 &4 &5 \\
1 &  5 &2 &4 &3 &1 &0 \\
2 &  2 &3 &5 &4 &0 &1 \\
3 &  3 &4 &1 &0 &5 &2 \\
4 &  4 &5 &0 &1 &2 &3 \\
5 &  1 &0 &2 &5 &3 &4
\end{array}
\qquad
L^{\opb} = 
\begin{array}{c|cccccc}
\opb & 0 & 1 & 2 & 3 & 4 & 5 \\
\hline
0 & 0 &3 &1 &2 &4 &5 \\
1 & 5 &2 &3 &4 &1 &0 \\
2 & 2 &4 &5 &3 &0 &1 \\
3 & 3 &1 &4 &0 &5 &2 \\
4 & 4 &5 &0 &1 &2 &3 \\
5 & 1 &0 &2 &5 &3 &4
\end{array}
\end{equation}
One possible bitrade $(T^{\opa},\, T^{\opb})$
where 
$T^{\opa} \subseteq L^{\opa}$
and
$T^{\opb} \subseteq L^{\opb}$
is shown below:
\begin{equation}
T^{\opa} = \begin{array}{c|ccccccc}
\opa  & 0 & 1 & 2 & 3 & 4 & 5 \\
\hline 
0 & \phantom{5} &1 &3 &\phantom{5} &\phantom{5} &\phantom{5} \\
1 & \phantom{5} &\phantom{5} &4 &3 &\phantom{5} &\phantom{5} \\
2 & \phantom{5} &3 &\phantom{5} &4 &\phantom{5} &\phantom{5} \\
3 & \phantom{5} &4 &1 &\phantom{5} &\phantom{5} &\phantom{5} \\
4 & \phantom{5} &\phantom{5} &\phantom{5} &\phantom{5} &\phantom{5} &\phantom{5} \\
5 & \phantom{5} &\phantom{5} &\phantom{5} &\phantom{5} &\phantom{5} &\phantom{5} 
\end{array}
\qquad
T^{\opb} = \begin{array}{c|ccccccc}
\opb  & 0 & 1 & 2 & 3 & 4 & 5 \\
\hline 
0 &\phantom{5} &3 &1 &\phantom{5} &\phantom{5} &\phantom{5} \\
1 &\phantom{5} &\phantom{5} &3 &4 &\phantom{5} &\phantom{5} \\
2 &\phantom{5} &4 &\phantom{5} &3 &\phantom{5} &\phantom{5} \\
3 &\phantom{5} &1 &4 &\phantom{5} &\phantom{5} &\phantom{5} \\
4 &\phantom{5} &\phantom{5} &\phantom{5} &\phantom{5} &\phantom{5} &\phantom{5} \\
5 &\phantom{5} &\phantom{5} &\phantom{5} &\phantom{5} &\phantom{5} &\phantom{5}
\end{array}
\end{equation}
So
$L^{\opb} = \left( L^{\opa} \setminus T^{\opa} \right) \cup T^{\opb}$
and
$L^{\opa} = \left( L^{\opb} \setminus T^{\opb} \right) \cup T^{\opa}$.
\end{example}

\begin{example}\label{exBitradeZ3}
The following bitrade is simply a cyclic row-shift of $T^{\opa} =
\mathbb{Z}_3$,
the integers under addition modulo~$3$:
\begin{equation*}
T^{\opa} = \begin{array}{c|ccc}
\opa  & 0 & 1 & 2 \\
\hline 
0 & 0 & 1 & 2 \\
1 & 1 & 2 & 0 \\
2 & 2 & 0 & 1
\end{array}
\qquad
T^{\opb} = \begin{array}{c|ccc}
\opb  & 0 & 1 & 2 \\
\hline 
0 & 1 & 2 & 0 \\
1 & 2 & 0 & 1 \\
2 & 0 & 1 & 2
\end{array}
\end{equation*}
 
\end{example}

\begin{example}\label{exSmallExampleTrade} 
Here is a larger bitrade:
\begin{equation}
	\begin{array}{c|cccc}
	\opa & 1 & 2 & 3 & 4 \\
	\hline 1 & 1 & 4 & 3 & 2 \\
	2 & 4 & 3 &   &   \\
	3 &   &   & 2 & 1 \\
	4 & 3 & & 1 \\
	\end{array}
\qquad
	\begin{array}{c|cccc}
	\opb & 1 & 2 & 3 & 4 \\
	\hline 1 & 4 & 3 & 2 & 1 \\
	2 & 3 & 4 &   &   \\
	3 &   &   & 1 & 2 \\
	4 & 1 & & 3 \\
	\end{array}
\label{eqn:smallExampleTrade}
\end{equation}
\end{example}

\subsection{Latin critical sets}

\begin{definition}\label{defnCriticalSet}
A partial latin square $C \subseteq L$ is a {\em critical set\/} if
\begin{enumerate}

\item $C$ has unique completion to $L$; and\label{defnCS1}

\item no proper subset of $C$ satisfies~\ref{defnCS1}.
\end{enumerate}
\end{definition}

\begin{example}
Latin square $\mathbb{Z}_2$ and a critical set:
\[
	\begin{array}{|c|c|}
	\hline 0&1\\
	\hline 1&0\\
	\hline 
	\end{array}
\qquad
	\begin{array}{|c|c|}
	\hline ~&~\\
	\hline ~&0\\
	\hline 
	\end{array}
\]
\end{example}

\begin{example}
Latin square $L_3$ and critical set $P_3 \subset L_3$:
\[
	\begin{array}{|c|c|c|c||c|c|c|c|}
	\hline 0&1&2&3&4&5&6&7\\
	\hline 1&0&3&2&5&4&7&6\\
	\hline 2&3&0&1&6&7&4&5\\
	\hline 3&2&1&0&7&6&5&4\\
	\hline
	\hline 4&5&6&7&0&1&2&3\\
	\hline 5&4&7&6&1&0&3&2\\
	\hline 6&7&4&5&2&3&0&1\\
	\hline 7&6&5&4&3&2&1&0\\
	\hline 
	\end{array}
\qquad
	\begin{array}{|c|c|c|c||c|c|c|c|}
	\hline 0&1&2&3&4&5&6&~\\
	\hline 1&0&3&2&5&4&~&~\\
	\hline 2&3&0&1&6&~&4&~\\
	\hline 3&2&1&0&~&~&~&~\\
	\hline
	\hline 4&5&6&~&0&1&2&~\\
	\hline 5&4&~&~&1&0&~&~\\
	\hline 6&~&4&~&2&~&0&~\\
	\hline ~&~&~&~&~&~&~&~\\
	\hline 
	\end{array}
\]
\end{example}

Fix a latin square $L$ and define the {\em trade space}
\[
\mathcal{T}_L = \{ T^{\opa} \mid 
T^{\opa} \subset L \textnormal{ and } 
(T^{\opa},\, T^{\opb}) \textnormal{ is a bitrade} \}.
\]

\begin{lemma}
Let $C$ be a critical set of the latin square $L$. Then for any 
$T \in \mathcal{T}_L$, there exists $c \in C$ such that $c \in T$.  
\end{lemma}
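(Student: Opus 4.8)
The plan is a short proof by contradiction. Suppose $T = T^{\opa} \in \mathcal{T}_L$, with disjoint mate $T^{\opb}$, and suppose toward a contradiction that $C \cap T^{\opa} = \emptyset$. Consider the array
\[
L' = \left( L \setminus T^{\opa} \right) \cup T^{\opb} .
\]
I will show that $L'$ is a latin square of order $n$ with $L' \neq L$, and yet $C \subseteq L'$; since $C \subseteq L$ as well, this exhibits two distinct completions of $C$, contradicting clause~\ref{defnCS1} of Definition~\ref{defnCriticalSet}.

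First I would verify that $L'$ is a latin square. By clause~(1) of Definition~\ref{defnBitradeLatin}, $T^{\opa}$ and $T^{\opb}$ occupy exactly the same set of cells, so $L'$ has the same filled cells as $L$, namely all $n^2$ of them; concretely, a cell not used by $T^{\opa}$ keeps its $L$-entry, while a cell used by $T^{\opa}$ receives the unique $T^{\opb}$-entry in that cell. Now fix a row $i$ and let $S_i$ be the set of symbols appearing in row $i$ of $T^{\opa}$. Passing from $L$ to $L'$ removes from row $i$ exactly one copy of each symbol of $S_i$ (since $T^{\opa}$ is a partial latin square) and inserts exactly one copy of each symbol appearing in row $i$ of $T^{\opb}$; by clause~(3) of Definition~\ref{defnBitradeLatin} the latter set is again $S_i$, so every symbol of $N$ still occurs exactly once in row $i$ of $L'$. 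The identical argument applies columnwise, so $L'$ is a latin square of order $n$.

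Next, $L' \neq L$: as is standard we take bitrades to be nonempty (the trivial pair $(\emptyset,\emptyset)$, for which the statement is vacuously false, being excluded by convention), so choose $(r, c, e) \in T^{\opa}$; by clause~(2) of Definition~\ref{defnBitradeLatin} the entry of $L'$ in cell $(r,c)$ is some $e' \neq e$, whence $L' \neq L$. Finally, $C \cap T^{\opa} = \emptyset$ gives $C \subseteq L \setminus T^{\opa} \subseteq L'$. Thus $C$ lies in both $L$ and $L'$, two distinct latin squares of order $n$, so $C$ does not have a unique completion to $L$ — a contradiction. Therefore some $c \in C$ satisfies $c \in T$.

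I expect the one point needing care to be the claim that the swap $L \mapsto L'$ again produces a latin square; this is exactly where clauses~(1) and~(3) of Definition~\ref{defnBitradeLatin} (equivalently \Rdisjoint, \RtripA and \RtripB of Definition~\ref{defnBitradeA123}) are used, and everything else is routine bookkeeping. It is also worth stating explicitly the standing convention that the bitrades, hence the members of $\mathcal{T}_L$, are nonempty, without which the statement fails.
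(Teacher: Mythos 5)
Your proof is correct. The paper actually states this lemma without any proof at all, so there is nothing to compare against; your argument --- swapping $T^{\opa}$ for its disjoint mate to produce a second latin square $L' = (L \setminus T^{\opa}) \cup T^{\opb}$ containing $C$, contradicting unique completability --- is the standard one, and your explicit remark that the lemma requires the convention that bitrades (hence elements of $\mathcal{T}_L$) are nonempty is a point the paper leaves implicit.
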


An long-standing open conjecture is that the size of the smallest
critical set in any latin square of order $n$ is $\lfloor n^2/4
\rfloor$. See \cite{MR2330083}, \cite{MR2369994} and \cite{MR2136056}
for recent work on this problem.

\subsection{Simplicial complexes}

We follow the presentation of Faridi~\cite{faridi2002}.
A {\em facet} of a simplicial complex is a maximal face under
inclusion.  A {\em vertex cover} $A$ of a simplicial complex $\Delta$
is a subset of vertices of $\Delta$ such that any facet is adjacent to some $a
\in A$. A {\em minimal vertex cover} $A$ is a vertex cover of $\Delta$
such that no proper subset of $A$ is a vertex cover.

Let $L$ be a latin square of order $n$. Form the simplicial complex
$\Delta(L)$ with vertices corresponding to trades $T \subset L$, and
edges $(T,T')$ for trades $T$, $T' \subset L$ such that $T \cap T'
\neq \emptyset$, and so on for faces of higher dimensions.

\begin{lemma}
For each critical set $C$ of $L$ there exists a minimal vertex cover
$A_C$ of $\Delta(L)$ and $\left| C \right| = \left| A_C \right|$.
\end{lemma}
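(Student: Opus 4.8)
The plan is to establish a bijection between critical sets of $L$ and minimal vertex covers of $\Delta(L)$ by exhibiting $A_C = C$ (identifying each cell $c \in C$ with the set of trades containing it, or more precisely, viewing the vertices of $\Delta(L)$ as labelled by trades and noting that each cell of $L$ determines a sub-collection of trades). First I would recall from the preceding lemma that every trade $T \in \mathcal{T}_L$ meets $C$, i.e. $C \cap T \neq \emptyset$. Since the facets of $\Delta(L)$ correspond to maximal pairwise-intersecting families of trades, and every vertex of such a facet is a trade meeting $C$, the set $A_C := \{\, T \in \mathcal{T}_L \mid T \cap C \neq \emptyset \,\}$ is a vertex cover of $\Delta(L)$: any facet contains at least one trade in $A_C$. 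This gives the easy direction, that a vertex cover exists; the content of the lemma is in the cardinality.

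Next I would build the correspondence the other way. For each cell $c \in C$, let $V_c = \{\, T \in \mathcal{T}_L \mid c \in T \,\}$ be the set of trades through $c$; then $A_C = \bigcup_{c \in C} V_c$. The key claim is that for a \emph{critical} set $C$ this union is irredundant and moreover that a suitable sub-cover can be extracted that is in bijection with $C$. Concretely: by minimality of $C$, for each $c \in C$ the set $C \setminus \{c\}$ does \emph{not} have unique completion to $L$, so there is a second completion, and the symmetric difference of the two completions is (the support of) a bitrade $T_c \subset L$ with $c \in T_c$ but $(C \setminus \{c\}) \cap T_c = \emptyset$ — that is, $T_c$ is a trade hitting $C$ \emph{only} in $c$. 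This is the standard equivalence between critical sets and the property that every trade meets $C$ while no cell is dispensable. Using these witnessing trades $T_c$, one shows that the family $\{T_c \mid c \in C\}$ picks out, within any minimal vertex cover refining $A_C$, exactly $|C|$ distinct "private" vertices, forcing a minimal vertex cover of size $|C|$; and conversely that no vertex cover can be smaller, since the pairwise-disjoint-except-at-$c$ structure of the $T_c$ would require distinct cover elements.

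The cleanest route is probably: (1) show $A_C$ as defined is a vertex cover; (2) show that any minimal vertex cover $A'_C \subseteq A_C$ must contain, for each $c \in C$, at least one trade $T$ with $c \in T$ — because the witnessing trade $T_c$ (whose only intersection with $C$ is $\{c\}$) is itself a vertex (take the facet that is just $\{T_c\}$ if $T_c$ is $\cap$-maximal, or any facet through $T_c$; every trade in that facet meets $C$, but by choosing the facet carefully every such trade meets $C$ in a set containing... ) — here I need the finer point that $T_c$ can be taken so that every trade intersecting $T_c$ also passes through $c$, which would make the star of $T_c$ a facet whose unique cover-obligation is $V_c$; (3) conclude $|A'_C| \geq |C|$; (4) exhibit one such cover of size exactly $|C|$ by choosing one $T_c$ per $c$ and checking it covers all facets, using the first lemma. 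Then minimality of this explicit cover follows from step (3) applied to its subsets, which already meet $A_C$-covering obstructions per cell.

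The hard part will be step (2) — pinning down the exact combinatorial reason that the minimal vertex cover is forced to have one element "responsible" for each $c \in C$ and not fewer. The subtlety is that a single trade $T$ could pass through several cells of $C$, so naively $A_C$ could admit a sub-cover smaller than $|C|$; the proof must exploit criticality (the existence of the witnessing trades $T_c$ meeting $C$ in exactly one cell) to rule this out, and must interface correctly with the \emph{facet} structure of $\Delta(L)$ rather than just its vertex set. I would expect to need a lemma of the form: for each $c \in C$ there is a facet $F_c$ of $\Delta(L)$ such that $A \cap F_c \subseteq V_c$ for \emph{some} minimal cover under consideration — equivalently that the trades through $c$ that are pairwise-intersecting form a maximal face disjoint from $V_{c'}$ for $c' \neq c$ — and establishing this is where the real work lies, likely mirroring the argument already used to prove the preceding lemma about critical sets and $\mathcal{T}_L$.
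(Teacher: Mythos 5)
Your central construction---for each $c \in C$, use criticality to produce a witnessing trade $T_c$ with $T_c \cap C = \{c\}$ (the symmetric difference of two completions of $C \setminus \{c\}$)---is sound and is in fact a slicker route to the object the paper builds. The paper instead forms the sets $S_i$ of all trades through $c_i$ and proves they admit a system of distinct representatives via a pigeonhole/Hall-type argument from criticality; your trades $T_c$ are automatically pairwise distinct (they meet $C$ in different singletons), so they form such a system for free. The cover property is then immediate and you do not need the refinement you agonize over: given any facet $F$, pick $T \in F$; by the preceding lemma $T$ contains some $c \in C$, and since $c \in T_c$ as well, $T$ and $T_c$ share the cell $c$, so $F$ is adjacent to $T_c$ in the sense the paper uses. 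Nowhere is it required that every trade meeting $T_c$ pass through $c$.

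The genuine problems are these. First, you have conflated \emph{minimal} (inclusion-minimal: no proper subset is a cover) with \emph{minimum} (smallest cardinality). The lemma only asks for an inclusion-minimal cover of size $|C|$; your steps (2)--(3), which try to force \emph{every} vertex cover to have at least $|C|$ elements, are aimed at the wrong target (that cardinality comparison is the content of the companion lemma in the other direction), and it is precisely there that you declare the proof open (``establishing this is where the real work lies''), so as written the proposal is not a proof. Second, what you actually must supply and do not is the inclusion-minimality of $\{T_c \mid c \in C\}$: if $T_{c_i}$ could be deleted, one argues that every trade would still meet $C \setminus \{c_i\}$, whence $C \setminus \{c_i\}$ is uniquely completable (the converse direction of the trade/unique-completion equivalence), contradicting criticality of $C$; you defer this to the unproven step (3). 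Finally, your opening definition $A_C := \{T \mid T \cap C \neq \emptyset\}$ is, by the first lemma, all of $\mathcal{T}_L$, i.e.\ the trivial cover---a false start that should be cut.
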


\begin{proof}
Let $C = \{c_1, \dots, c_m\}$ be a critical set in $L$. Create a family
of $m$ sets
\[
S_i = \{ T \subseteq L \mid c_i \in T \}
\]
where $T$ is a trade in $L$. We will show that the $S_i$ have a system
of distinct representatives. Choose any $\ell$ of the sets,
$S_{i_1}$, \dots, $S_{i_{\ell}}$ and suppose that their union
$\mathcal{S}$ contains 
$l < \ell$ trades. By the pigeonhole principle there is an entry
$c_{i_u}$ such that each trade that intersects 
$c_{i_u}$ also intersects some
$c_{i_v}$, $v \neq u$. So the set $C \setminus \{ c_{i_u} \}$ is a
critical set, contradicting the minimality of $C$. Call the system of
distinct representatives $A_C$.

To see that $A_C$ is a vertex cover of $\Delta(L)$, let 
$F$ be some facet of $\Delta(L)$. Pick any $T \in F$. Then there exists
some $c_i \in C$ such that $c_i \in T$. We chose some $\overline{T}$ as
the representative of $S_i$. There is an edge $(T, \overline{T})$ due to
the common element $c_i$, so $F$ is adjacent to $\overline{T} \in A_C$.

For minimality, suppose that 
$A_C \setminus \{ T \}$ is a vertex cover for some trade $T$ in $L$.
Suppose that $T$ was chosen as the representative of $S_i$. Let
$U$ be any trade in $L$ and consider $C' = C \setminus \{ c_i \}$.
If $c_i \notin U$ then some other entry of $C$ covers $U$ since $C$ is a
critical set. Otherwise, suppose that 
$c_i \in U$ then consider the facet $F$ containing $T$ and $U$. By
assumption there is some $T' \in A_C \setminus \{T\}$ such that
$T' \in F$. Then $T'$ must be the representative for $S_j$
(where $j \neq i$ since $A_C$ is a system of distinct representatives)
so $c_j \in U$. In this way any trade $U$ is covered
by some element of $C' = C \setminus \{ c_i \}$, contradicting the
minimality of $C$.  
\end{proof}

\begin{lemma}
For each minimal vertex cover $A$ of $\Delta(L)$ there exists a critical
set $C_A$ of $L$
such that $\left| C_A \right| = \left| A \right|$.
\end{lemma}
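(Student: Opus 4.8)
The plan is to reverse the construction of the previous lemma: from the minimal vertex cover $A$ I extract one cell of $L$ per trade in $A$, and argue that the resulting set of $\left|A\right|$ cells is a critical set. The bridge between the two settings is the standard fact that a partial latin square $P\subseteq L$ has a unique completion to $L$ exactly when $P$ meets every trade $T\in\mathcal{T}_L$: if some trade $T^{\opa}$ with disjoint mate $T^{\opb}$ is disjoint from $P$, then $(L\setminus T^{\opa})\cup T^{\opb}$ is a second completion, and conversely a second completion $L'$ gives the trade $L\setminus L'$, which is disjoint from $P$ (the forward implication is the unlabelled lemma above). So a critical set of $L$ is precisely a set of cells meeting every trade and minimal with that property, and this is what I will produce.

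First I would extract the cells. Write $A=\{T_1,\dots,T_k\}$ with the $T_i$ pairwise distinct. For each $i$, minimality of $A$ forces $A\setminus\{T_i\}$ not to be a vertex cover, so some facet $F_i$ of $\Delta(L)$ is not covered by $A\setminus\{T_i\}$; since $A$ does cover $F_i$, the facet $F_i$ is adjacent to $T_i$ and to no $T_j$ with $j\neq i$. Consequently there is a vertex $U_i\in F_i$ meeting $T_i$ (take $U_i=T_i$ when $T_i\in F_i$), while $U_i$ is disjoint from every $T_j$ with $j\neq i$. Pick $c_i\in U_i\cap T_i$ and set $C_A=\{c_1,\dots,c_k\}$. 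These cells are pairwise distinct: if $c_i=c_j$ with $i\neq j$ then $c_i\in T_j$, so $U_i$ (which contains $c_i$) meets $T_j$, making $F_i$ adjacent to $T_j$, a contradiction. Hence $\left|C_A\right|=k=\left|A\right|$.

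The minimality clause in the definition of a critical set is then immediate, regardless of the other clause: the trade $U_i$ contains $c_i$ but none of the $c_j$ with $j\neq i$ (each such $c_j$ lies in $T_j$, which is disjoint from $U_i$), so $C_A\setminus\{c_i\}$ misses the trade $U_i$; thus no proper subset of $C_A$ meets every trade. It remains to show that $C_A$ itself meets every trade, equivalently that $C_A$ has a unique completion to $L$. Given a trade $U\in\mathcal{T}_L$, the idea is to place $U$ inside a suitably chosen facet $F$ of $\Delta(L)$, use the fact that $A$ covers $F$ via some $T_i$, and chase the common cell witnessing that adjacency back to $U$; making the choice of $F$ (and of the witnessing cell) depend coherently on $U$ is the reverse of the system-of-distinct-representatives bookkeeping in the previous lemma.

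I expect this last point to be the main obstacle. The defining property of a vertex cover of $\Delta(L)$ is a clique/adjacency statement about trades, whereas the conclusion ``$c_i\in U$'' concerns a single cell; bridging the gap forces one to control \emph{which} facet tests a given trade $U$ and \emph{which} shared cell is read off, and to check these choices can be made consistently over all $U\in\mathcal{T}_L$. Once this translation is in place, the distinctness of the $c_i$ and the minimality of $C_A$ --- already sketched above --- complete the argument, so that $C_A$ is a critical set of $L$ with $\left|C_A\right|=\left|A\right|$.
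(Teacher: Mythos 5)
There is a genuine gap, and you have located it yourself: you never establish clause~1 of Definition~\ref{defnCriticalSet}, that $C_A$ meets every trade (equivalently, is uniquely completable). The missing idea is the paper's choice of the representative cell. The paper does not pick $x_T$ merely in $U\cap T$ for one auxiliary trade $U$; it picks $x_T$ \emph{in the common intersection of all the trades making up the facet containing $T$} (here one uses the convention, made explicit later for intercalates, that a higher-dimensional face of $\Delta(L)$ is a collection of trades sharing a common cell, so a facet has a nonempty common intersection). With that choice the covering argument is immediate: any trade $U$ lies in some facet $F$, the vertex cover $A$ supplies some $T'\in A$ in $F$, and $x_{T'}$ lies in every trade of that facet, hence in $U$. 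Your cells $c_i\in U_i\cap T_i$ carry no such guarantee --- a trade $U$ in a facet containing $T_i$ may meet $T_i$ only in cells other than $c_i$ --- so the unique-completability half cannot be completed from your construction, and the ``coherent choice of facet and witnessing cell'' you hope for is exactly what fails without the common-intersection device.

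Two smaller points of comparison. Your treatment of minimality (exhibiting, for each $i$, a trade $U_i$ met by $c_i$ and by no other $c_j$) is a direct argument and is fine as far as it goes, whereas the paper deduces minimality from the previous lemma: if $X\setminus\{x_T\}$ were uniquely completable it would contain a critical set, hence yield a vertex cover smaller than $A$. Either route is acceptable once the covering property is in hand. Finally, be aware that even the paper's proof leans on ``the facet containing $T$'' being unambiguous; if $T'$ lies in several facets one must either choose $x_{T'}$ in the intersection of the trades of \emph{each} such facet or argue per facet, a point worth making explicit if you rewrite the proof with the corrected choice of cells.
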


\begin{proof}
Let $A$ be a minimal vertex cover of $\Delta(L)$.  For each $T \in
A$, let $x_T$ be an entry of $T$ contained in the intersection of
the trades that make up the facet containing $T$.  We show that $X$
is a critical set. Suppose that $U$ is some trade in $L$ and no
$x_T$ is in $U$. The vertex $U$ is adjacent to some facet $F$ and
by definition some $T' \in A$ is adjacent to this $F$. Then by
definition $x_{T'} \in U$, a contradiction. Thus
$X$ is a uniquely completable partial latin square.

For minimality, suppose that the set $X' = X \setminus \{x_T\}$ is
uniquely completable for some
$x_T \in X$. By the previous lemma, $X'$ is equivalent to a vertex
cover $A'$ of the same size, contradicting the minimality of
$A$. So $X$ is minimal and $\left| X \right| = \left| A \right|$.
\end{proof}

\begin{remark}
A critical set of $L$ is equivalent to a vertex cover of $\Delta(L)$.
The simplicial complex $\Delta(L)$ captures information about the
intersections of all trades. The critical sets correspond to minimal
vertex covers.

Faridi's paper (\cite{faridi2002}, Proposition~1) shows that a minimum vertex cover is
the same as a minimal prime ideal in the {\em facet ideal}
$\mathcal{F}(\Delta)$ in the polynomial ring
$R = k[x_1, \dots, x_n]$. In future research we would like to calculate 
more detailed information about the facet ideal and its minimal prime ideals.
\end{remark}

\section{Computations}

In this section we present computations of the {\em reduced homology
groups} $H_k$ of the facet ideal $\Delta(L)$ for various latin squares
$L$ of small order. Informally, the group $H_k$ indicates the number of
$k$-dimension `holes' in the simplicial complex at hand.
See~\cite{MR1867354} for more information about homology theory.
We use a Sage script~\cite{code} and the GAP package Simplicial Homology to
calculate the reduced homology groups.
For more information about Sage see~\cite{sage}. The Simplicial Homology
package is available at \url{http://www.cis.udel.edu/~dumas/Homology/}.

In this section, $B_n$ denotes the addition table for integers modulo
$n$ (also known as the back-circulant square). Also, let
$L_s$ denote the latin square corresponding to the group table of the
elementary abelian 2--group of order $2^s$. Formally, these latin
squares are defined by
\[
L_1 = \begin{tabular}{|c|c|c|c|c|c|}
\hline 0 & 1 \\
\hline 1 & 0 \\
\hline 
\end{tabular} \\
\]
and for $s \geq 2$,
\begin{alignat*}{3}
L_s &= L_1 \times L_{s-1} &= &\{ (x,y;z), (x,y+n/2;z+n/2),
(x+n/2,y;z+n/2),\\
&&	& (x+n/2,y+n/2;z) \mid (x,y;z) \in L_{s-1} \}
\end{alignat*}
For example,
\[
L_3 =
	\begin{tabular}{|c|c|c|c||c|c|c|c|}
	\hline 0&1&2&3&4&5&6&7\\
	\hline 1&0&3&2&5&4&7&6\\
	\hline 2&3&0&1&6&7&4&5\\
	\hline 3&2&1&0&7&6&5&4\\
	\hline
	\hline 4&5&6&7&0&1&2&3\\
	\hline 5&4&7&6&1&0&3&2\\
	\hline 6&7&4&5&2&3&0&1\\
	\hline 7&6&5&4&3&2&1&0\\
	\hline 
	\end{tabular}
\]

We now present computations for small orders:

\begin{center}
\begin{tabular}{|c|c|}
\hline Latin square & Homology groups \\
\hline $B_3$ & $[0, 0, 0, 10, 0, 0]$ \\
\hline $B_4$ & $[0, 0, 0, 0, 0, 0, 0, 0, 0, 0, 0, 0, 0, 0, 1]$ \\
\hline $L_2$ & $[0, 0, 0, 0, 0, 0, 0, 0, 0, 0, 0, 8, 9, 0 ]$ \\
\hline
\end{tabular}
\end{center}

So for $B_3$ we have
$\tilde H_0 = 0$,
$\tilde H_1 = 0$,
$\tilde H_2 = 0$,
$\tilde H_3 = 10$,
$\tilde H_4 = 0$,
$\tilde H_5 = 0$.

\subsection{Intercalate homology}

An {\em intercalate} is a latin trade of size four. Intercalates are
interesting because they are the simplest (smallest) type of latin trade
and in fact any trade can be written as a sum of
intercalates~\cite{MR1880972}. Another interesting question is whether
there are latin squares with as many intercalates as possible, or as few
as is possible (see for example~\cite{MR1700841}).

For a latin square $L$ we define the 
define the {\em intercalate trade space} as
\[
\mathcal{I}_L = \{ T^{\opa} \mid 
T^{\opa} \subset L,\,
\left| T^{\opa} \right| = 4 \textnormal{ and } 
(T^{\opa},\, T^{\opb}) \textnormal{ is a bitrade} \}.
\]
We can then create a simplicial complex where vertices are intercalates,
and higher dimension simplices are collections intercalates that
intersect in some common point. Here is the homology group information
for the elementary abelian 2--group of various orders:

\begin{center}
\begin{tabular}{|c|c|}
\hline Latin square & Intercalate homology groups \\
\hline $L_1$ & $[0, 0, 0, 0]$ \\
\hline $L_2$ & $[0, 21, 0, 0]$ \\
\hline $L_3$ & $[0, 273, 0, 0]$ \\
\hline $L_4$ & $[0, 2625, 0, 0]$ \\
\hline $L_5$ & $[0, 22785, 0, 0]$ \\
\hline $L_6$ & $[0, 189441, 0, 0]$ \\
\hline
\end{tabular}
\end{center}

For the back-circulant $B_n$ with $n \geq 4$ and $n$ even we have
$\tilde H_1 = 0$,
$\tilde H_2 = 0$,
$\tilde H_3 = 0$,
and only $\tilde H_0 \neq 0$. The values for 
$\tilde H_0$ are given below:

\begin{quote}
3, 8, 15, 24, 35, 48, 63, 80, 99, 120, 143, 168, 195, 224, 255, 288,
323, 360, 399, 440, 483, 528, 575, 624, 675, 728, 783, 840, 899, 960,
1023, 1088, 1155, 1224, 1295, 1368, 1443, 1520.  1599, 1680, 1763,
1848, 1935, 2024, 2115, 2208, 2303, 2400, 2499, 2600, 2703, 2808,
2915, 3024, 3135, 3248, 3363, 3480, 3599, 3720, 3843, 3968, 4095,
4224, 4355, 4488, 4623, 4760, 4899, 5040, 5183, 5328, 5475, 5624,
5775, 5928, 6083, 6240, 6399, 6560, 6723, 6888, 7055, 7224, 7395, 7568,
7743, 7920, 8099, 8280, 8463, 8648, 8835, 9024, 9215, 9408, 9603, 9800.
\end{quote}


\begin{thebibliography}{10}

\bibitem{code}
Tradespace source code.
\newblock \url{http://bitbucket.org/carlohamalainen/tradespace}.

\bibitem{MR2330083}
Nicholas~J. Cavenagh.
\newblock A superlinear lower bound for the size of a critical set in a {L}atin
  square.
\newblock {\em J. Combin. Des.}, 15(4):269--282, 2007.

\bibitem{MR1700841}
P.~Danziger and E.~Mendelsohn.
\newblock Intercalates everywhere.
\newblock In {\em Geometry, combinatorial designs and related structures
  ({S}petses, 1996)}, volume 245 of {\em London Math. Soc. Lecture Note Ser.},
  pages 69--88. Cambridge Univ. Press, Cambridge, 1997.

\bibitem{MR2369994}
Diane Donovan, James LeFevre, and G.~H.~John van Rees.
\newblock On the spectrum of critical sets in {L}atin squares of order {$2\sp
  n$}.
\newblock {\em J. Combin. Des.}, 16(1):25--43, 2008.

\bibitem{MR1880972}
Diane Donovan and E.~S. Mahmoodian.
\newblock An algorithm for writing any {L}atin interchange as a sum of
  intercalates.
\newblock {\em Bull. Inst. Combin. Appl.}, 34:90--98, 2002.

\bibitem{faridi2002}
Sara Faridi.
\newblock The facet ideal of a simplicial complex, 2002.

\bibitem{MR2136056}
M.~Ghandehari, H.~Hatami, and E.~S. Mahmoodian.
\newblock On the size of the minimum critical set of a {L}atin square.
\newblock {\em Discrete Math.}, 293(1-3):121--127, 2005.

\bibitem{hamalainen2007}
Carlo H\"{a}m\"{a}l\"{a}inen.
\newblock {\em Latin Bitrades and Related Structures}.
\newblock {PhD} in {M}athematics, Department of Mathematics, The University of
  Queensland, 2007.
\newblock http://carlo-hamalainen.net/phd/hamalainen-20071025.pdf.

\bibitem{MR1867354}
Allen Hatcher.
\newblock {\em Algebraic topology}.
\newblock Cambridge University Press, Cambridge, 2002.

\bibitem{sage}
W.\thinspace{}A. Stein et~al.
\newblock {\em {S}age {M}athematics {S}oftware ({V}ersion 4.0.1)}.
\newblock The Sage~Group, 2009.
\newblock {\tt http://www.sagemath.org}.

\end{thebibliography}

\end{document}